\documentclass[12pt]{amsart}
\usepackage{graphicx}
\usepackage{graphics}
\usepackage{amsmath}
\usepackage{amscd}
\usepackage{latexsym}
\usepackage{amssymb}
\usepackage{amsthm}
\usepackage{dsfont}
\usepackage{fancyhdr}
\usepackage[shortlabels]{enumitem}

\usepackage{tikz}
\usetikzlibrary{arrows,cd,matrix,positioning,decorations.pathmorphing}
\usepackage{setspace}
%\onehalfspacing
\tikzset{snake arrow/.style=
{->,
decorate,
decoration={snake,amplitude=.4mm,segment length=2mm,post length=1mm}},
}

\usepackage{blindtext}
\usepackage{scrextend}
\addtokomafont{labelinglabel}{\sffamily}

\usepackage{hyperref}  % this is always the last package
\hypersetup{
  colorlinks   = true, %Colours links instead of ugly boxes
  urlcolor     = blue, %Colour for external hyperlinks
  linkcolor    = blue, %Colour of internal links
  citecolor    = blue  %Color of citations=
}

\begin{document}

\textwidth 6.2in
\textheight 7.6in
\evensidemargin.75in
\oddsidemargin.75in

\newtheorem{Thm}{Theorem}
\newtheorem{Lem}[Thm]{Lemma}
\newtheorem{Cor}[Thm]{Corollary}
\newtheorem{Prop}[Thm]{Proposition}
\newtheorem{Rm}{Remark}
\newtheorem{Qu}{Question}
\newtheorem{Def}{Definition}
\newtheorem{Exm}{Example}

\def\a{{\mathbb a}}
\def\C{{\mathbb C}}
\def\A{{\mathbb A}}
\def\B{{\mathbb B}}
\def\D{{\mathbb D}}
\def\E{{\mathbb E}}
\def\R{{\mathbb R}}
\def\P{{\mathbb P}}
\def\S{{\mathbb S}}
\def\Z{{\mathbb Z}}
\def\O{{\mathbb O}}
\def\H{{\mathbb H}}
\def\V{{\mathbb V}}
\def\Q{{\mathbb Q}}
\def\Cn{${\mathcal C}_n$}
\def\CM{\mathcal M}
\def\CG{\mathcal G}
\def\CH{\mathcal H}
\def\CT{\mathcal T}
\def\CF{\mathcal F}
\def\CA{\mathcal A}
\def\CB{\mathcal B}
\def\CD{\mathcal D}
\def\CP{\mathcal P}
\def\CS{\mathcal S}
\def\CZ{\mathcal Z}
\def\CE{\mathcal E}
\def\CL{\mathcal L}
\def\CV{\mathcal V}
\def\CW{\mathcal W}
\def\IC{\mathbb C}
\def\IF{\mathbb F}
\def\IK{\mathcal K}
\def\IL{\mathcal L}
\def\IP{\bf P}
\def\IR{\mathbb R}
\def\IZ{\mathbb Z}

\title{On Smooth $4$-dimensional Poincar\'e conjecture}
%Corks of homotopy $4$-spheres}
\author{Selman Akbulut}
%\thanks{The first author is partially supported by NSF grant DMS 0905917}
\keywords{}
\address{G\"{o}kova Geometry Topology Institute,  Mu\u{g}la, Turkiye}
\email{akbulut.selman@gmail.com}
\subjclass{58D27,  58A05, 57R65}
\date{\today}
\begin{abstract} 
 \hspace{-.05in} 
 A proof of 4-dimensional smooth Poincar\'e Conjecture.
\end{abstract}

\date{}
\maketitle

\setcounter{section}{-1} 

\vspace{-.2in}

\section{Statement and a proof}  
It is known that homotopy equivalent simply connected closed smooth $4$-manifolds are $h$-cobordant to each other, and they differ from each other by corks \cite{a4}, \cite{m}. For the background material, and the techniques used in this paper we refer reader to  \cite{a1} and \cite{a2}.

\begin{Thm}\label{main} 
Every smooth homotopy $4$-sphere $\Sigma^{4}$ (i.e. closed smooth $4$-manifold homotopy equivalent to $S^{4}$) is diffeomorphic to $S^{4}$. 
\end{Thm}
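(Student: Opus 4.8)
The plan is to reduce the theorem to the statement that a cork twist carried out inside the standard $S^{4}$ cannot change the diffeomorphism type. Since $\Sigma^{4}$ is a closed simply connected smooth $4$-manifold homotopy equivalent to $S^{4}$, the result recalled at the beginning of this section (\cite{a4},\cite{m}) applies: $\Sigma^{4}$ is $h$-cobordant to $S^{4}$, and the two differ by a cork. Concretely, there is a compact contractible smooth $4$-manifold $C$ smoothly embedded in $S^{4}$, with $\partial C = M$ a homology $3$-sphere, and an involution $\tau\colon M\to M$, such that
\[
\Sigma^{4}\ \cong\ \bigl(S^{4}\setminus\mathrm{int}\,C\bigr)\cup_{\tau}C .
\]
Set $W:=\overline{S^{4}\setminus\mathrm{int}\,C}$. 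A Mayer--Vietoris and van Kampen computation shows that $W$ is again a compact contractible $4$-manifold, with $\partial W = M$ (orientation reversed); thus we are looking at two contractible fillings of a single homology $3$-sphere glued along their common boundary, with the untwisted gluing giving $S^{4}\cong W\cup_{\mathrm{id}}C$ and the twisted one giving $\Sigma^{4}\cong W\cup_{\tau}C$. Everything is now reduced to showing $W\cup_{\tau}C\cong W\cup_{\mathrm{id}}C$.

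I would try to prove this by showing that $\tau$, up to isotopy in $\mathrm{Diff}(M)$, extends to a self-diffeomorphism $\widehat{\tau}$ of the \emph{complementary} filling $W$: then $\widehat{\tau}\cup\mathrm{id}_{C}$ is the required diffeomorphism $W\cup_{\tau}C\to W\cup_{\mathrm{id}}C$. (Note $\tau$ does not extend over the cork $C$ itself --- that is what makes $C$ a cork --- so the content has to be on the $W$ side; the symmetric option of extending $\tau$ over $C$ after first enlarging $C$ to a bigger cork would serve equally well.) To carry this out I would make both fillings fully explicit: starting from a handle picture of $\Sigma^{4}$, use the handle calculus of \cite{a1},\cite{a2} (handle slides, $1$-handles as dotted circles, carving, cork/plug recognition) to put $C\subset S^{4}$ into a standard normal form, read off $W$ from the complementary handles, and then exhibit $\widehat{\tau}$ by an explicit finite sequence of handle moves. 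The key leverage is that $W\cup_{\mathrm{id}}C$ is \emph{already} the standard $S^{4}$: this pins down the gluing data along $M$ tightly enough to provide the room in which the twisting support of $\tau$ can be pushed across $W$ and unwound.

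The main obstacle is exactly this last extension step, and it is a real one: a cork twist performed inside a general ambient $4$-manifold genuinely changes the smooth structure --- that is the mechanism producing the known exotic pairs --- so the argument cannot be purely formal and must exploit a property special to $S^{4}$. Phrased in terms of gluings, one must show that $[\tau]$ lies in the image of $\pi_{0}\mathrm{Diff}(W)$ in $\pi_{0}\mathrm{Diff}(M)$ modulo the image of $\pi_{0}\mathrm{Diff}(C)$; equivalently, that the complement $W$ of a contractible codimension-zero piece of $S^{4}$ is not just an abstract contractible $4$-manifold but is rigid enough --- because $W\cup_{\mathrm{id}}C=\partial B^{5}$ inherits the standard involution, or because $W$ is itself standard --- to absorb any such $\tau$. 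I expect that establishing this is where essentially all the difficulty, and any close scrutiny of the proof, will reside; the remaining bookkeeping (handles, orientations, and the final assembly of the diffeomorphism) should be routine.
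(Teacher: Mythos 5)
Your reduction is essentially the same first half as the paper's argument, only phrased with contractible corks instead of protocorks: the paper takes the middle level $X$ of the h-cobordism, locates the two sphere families $\mathcal{A}$, $\mathcal{B}$, and observes that $\Sigma^{4}$ is obtained from $S^{4}$ by regluing a piece $\V(\mathcal{A})\subset S^{4}$ (a ``protocork,'' built from copies of $\P_{n}$, i.e.\ from $1$-handles and $0$-framed $2$-handles) via the zero--dot exchange involution; you instead invoke the contractible-cork decomposition of \cite{a4}, \cite{m} and reduce to showing $W\cup_{\tau}C\cong W\cup_{\mathrm{id}}C$. Both reductions are standard and correct, and both leave exactly the same statement to be proved: that a (proto)cork twist performed inside $S^{4}$ returns $S^{4}$.

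The genuine gap is that you do not prove that statement; you only name it and express the expectation that $\tau$ extends, up to isotopy, over the complementary filling $W$. That expectation \emph{is} the theorem. As you yourself observe, cork twists do change smooth structures in general ambient $4$-manifolds, so no formal property of $W$ (contractibility, embeddability in $S^{4}$, the identity $W\cup_{\mathrm{id}}C=\partial B^{5}$) can suffice, and neither ``$W$ is itself standard'' nor ``$\partial B^{5}$ inherits the standard involution'' is backed by an argument --- a contractible complement in $S^{4}$ need not be a ball, and nothing forces an extension of $\tau$ across it. The paper's entire second half is devoted to precisely this step, and it is only made tractable there because protocorks, unlike abstract contractible corks, have a completely explicit handle structure: the complement of $\P_{n}\subset S^{4}$ is identified with the complement of a slice disk $\D$ in $\#_{k}S^{2}\times B^{2}$, the twist becomes the replacement of the $2$-handle $h^{2}_{f(\gamma)}$ by $h^{2}_{\gamma}$, and the standardization of the result is argued via Gabai's light bulb theorem \cite{g} and Property R. Whatever one's assessment of the details of that handle-calculus argument, your proposal stops exactly at the point where all of the mathematical content lies, so as it stands it is a reformulation of the problem rather than a proof.
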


\begin{proof} We know that every such $\Sigma^{4}$ is smoothly $h$-cobordant to $S^{4}$, through a cobordism $Z^{5}$ consisting  of only $2$- and $3$-handles.  Let  $Z^{5}$ be such an $h$-cobordism from $S^{4}$ to $\Sigma^{4}$, obtained by attaching  $2$- handles $\{h^{2}_{j}\}$  to $S^{4}$, then attaching $3$-handles top of them. The $3$-handles upside down are duals to the $2$-handles $\{k^{2}_{j}\}$, attached to $\Sigma^{4}$. This decomposes $Z^{5}$ into union of two parts, meeting in the middle $4$-manifold $X^{4}$. 

$$Z^{5}= [S^{4} \cup h^{2}_{j}] \smile_{X} [\Sigma^{4}\cup k^{2}_{j}]$$

\begin{figure}[ht]  \begin{center}
\includegraphics[width=.4\textwidth]{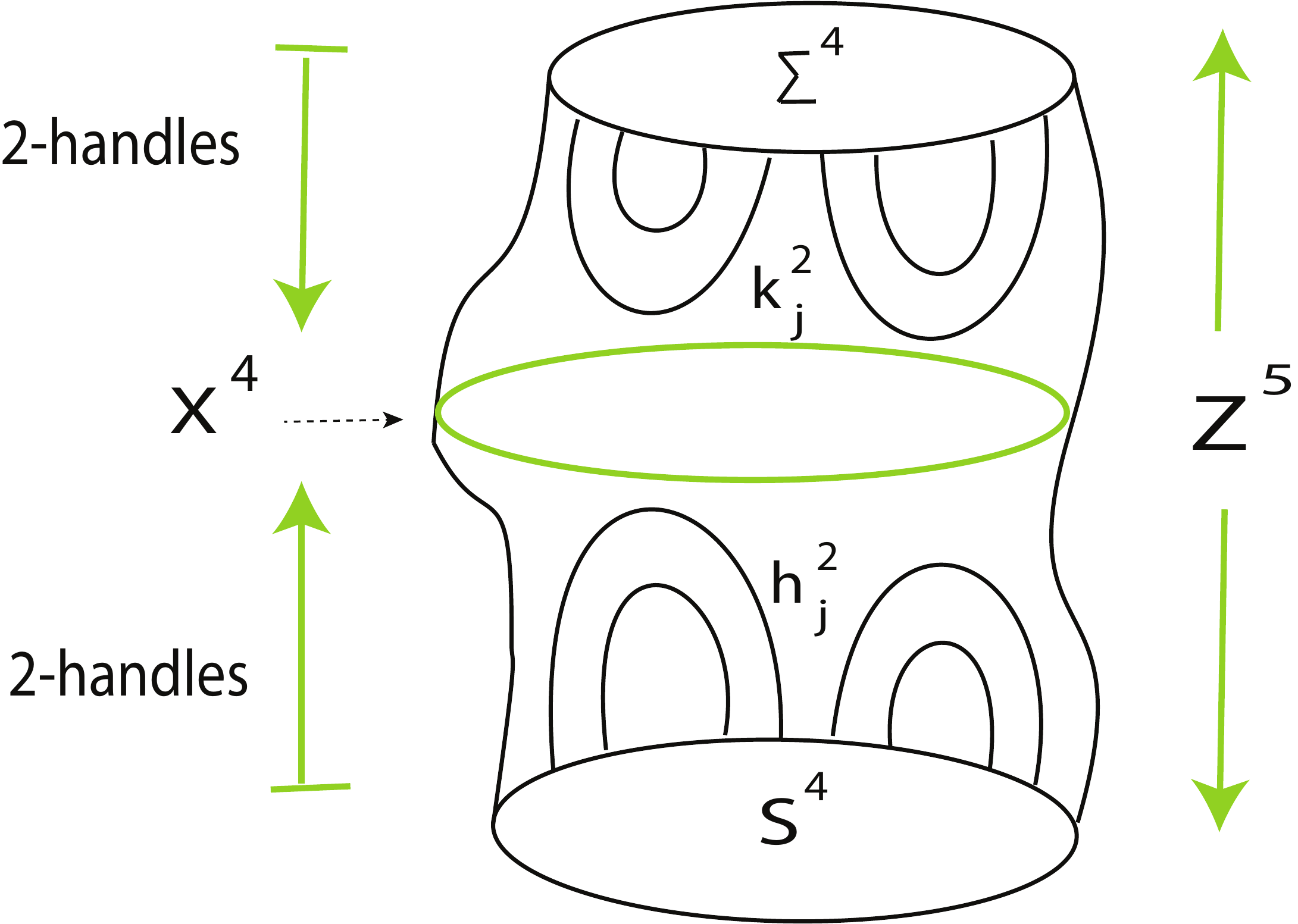}   
\caption{h-cobordism}   \label{k1}
\end{center}
\end{figure}

Since $S^{4}$ is simply connected, $X$ is diffeomorphic to  $\#_{n}S^{2}\times S^{2}$. By looking $Z^{5}$ from the other side, since $\Sigma^{4}$ is simply connected, we see $X$ is diffeomorphic to $(\# _{n} S^{2}\times S^{2}) \# \Sigma$.
Therefore $X$ contains two families of disjointly imbedded $2$-spheres $\CA =\{\sqcup A_{i}\}_{i=1}^{n}$ and $\CB=\{\sqcup B_{j}\}_{j=1}^{n}$, which are the belt spheres of $2$-handles $\{h^{2}_{j}\}$ and $\{k^{2}_{j}\}$, respectively.  Since $Z^{5}$ is homotopy product, they are imbedded with trivial normal bundles, and we can arrange algebraic intersection numbers $A_{i} . B_{j}=\delta_{ij}$. 

\begin{figure}[ht]  \begin{center}
\includegraphics[width=.65\textwidth]{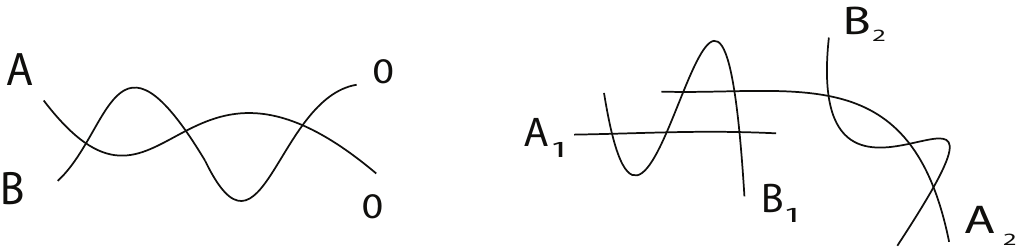}   
\caption{Various protocorks}   \label{y1}
\end{center}
\end{figure}

Let us denote the union of the tubular neighborhoods of these spheres in $X$ by $\V\subset X$. Clearly  surgering $X$ along the  $\CA$-spheres gives $S^{4}$, and surgering $X$ along the $\CB$-spheres gives $\Sigma^{4}$ (because these surgeries undo the affect of the $2$-handles). 
Now denote the manifold obtained from $\V$ by surgering  $\CA$-spheres by $\V(\CA)$; and  denote the manifold obtained from $\V$ by surgering  $\CB$-spheres by $\V(\CB)$. Clearly we have the inclusions $\V(\CA)\subset S^{4}$ and  $\V(\CB)\subset \Sigma^{4}$ . Note that $\partial \V(\CA)=\partial \V(\CB)$, and we have a diffeomorphism 
  $X - \V(\CA) \approx X - \V(\CB)$. We call $\V(\CA)$ a {\it protocork}, and call the following operation ``protocork twisting'' of $S^{4}$ along $\V(\CA)$. 
  $$f:  S^{4} \rightsquigarrow (S^{4} -V(\CA)) \smile V(\CB) =\Sigma^{4}$$ 
  
\noindent This corresponds to ``zero-dot exchange operation'' of corks.   Let us denote the protocork shown in the left picture of Figure~\ref{y1} by $\P_{n}$ ($2n+1$ intersection points). Handlebody of $\P_{n}$ can be described as in Figure~\ref{y2}.  The first cork appeared in \cite{a3} then generalized in \cite{m}. Protocorks are simpler objects than corks, which are basic building blocks of corks. Recently they were used by Ladu as tools in geometric analysis \cite{l}.  

\begin{figure}[ht]  \begin{center}
\includegraphics[width=.35\textwidth]{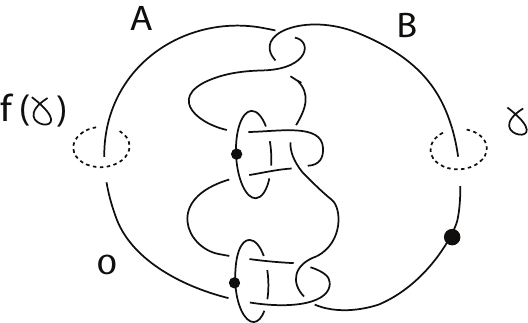}   
\caption{$\P_{n}$ when $n=1$}   \label{y2}
\end{center}
\end{figure}

\newpage

Next, starting with any imbedding of a protocork $\V(\A)\subset S^{4}$, we will describe the handlebody of the protocork twisting of $S^{4}$ along $\V(\A)$. For simplicity, we will first take $\V(\A) = \P_{n}$, then indicate how to modify the argument for the general case. We proceed by two steps. 
\vspace{.1in}
\begin{itemize}

\item[(1)] Draw the handlebody picture of the complement $C:=S^{4}-\P_{n}$.

\item[(2)]  Glue $\P_{n}$ to $C $ by the involution 
$f:\partial \P_{n}\to \partial \P_{n}$.

\end{itemize}

\vspace{.1in}

To simplify this process we first carve $\P_{n}$ across the meridianal $2$-disk $D$ of its $2$-handle (which $f(\gamma)$ bounds). Then notice that, up to $3$-handles this gives the identification $S^{4} = C\smile_{\partial} h^{2}_{f(\gamma)}$, because $N(D)$ upside-down is just the $2$-handle $h^{2}_{f(\gamma)}$ attached to $C$, and $\P_{n}-N(D) \approx \#_{k}B^{3}\times S^{1}$ can be viewed as $3$-handles attached to $C\smile h^{2}_{f(\gamma)}$. Hence, up to $3$-handles, gluing $\P_{n}$ to $C$ by  $f$  can be described by $C\smile h^{2}_{\gamma}$. 

  \begin{figure}[ht]  \begin{center}
\includegraphics[width=.62\textwidth]{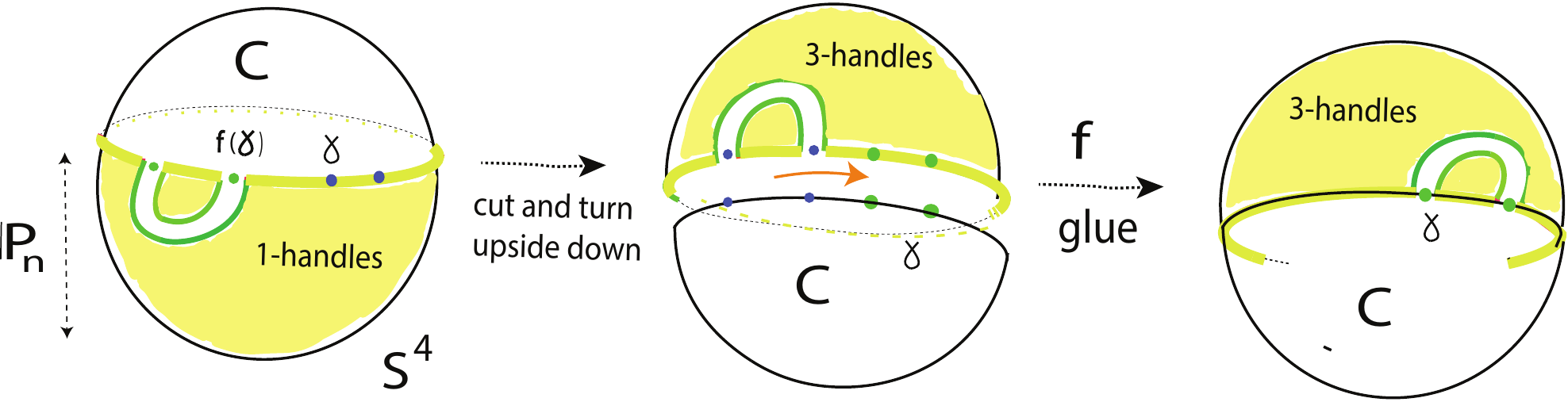}
\caption{$C\smile h_{f(\gamma)} \longrightarrow C\smile h_{\gamma}$}   \label{p1}
\end{center}
\end{figure}

From any imbedding $\P_{n}\subset S^{4}$, we can construct a handlebody of  $C= S^{4}-\P_{n}$ in two steps, which amounts to turning $S^{4}$ upside down:

\begin{figure}[ht]  \begin{center}
\includegraphics[width=.44\textwidth]{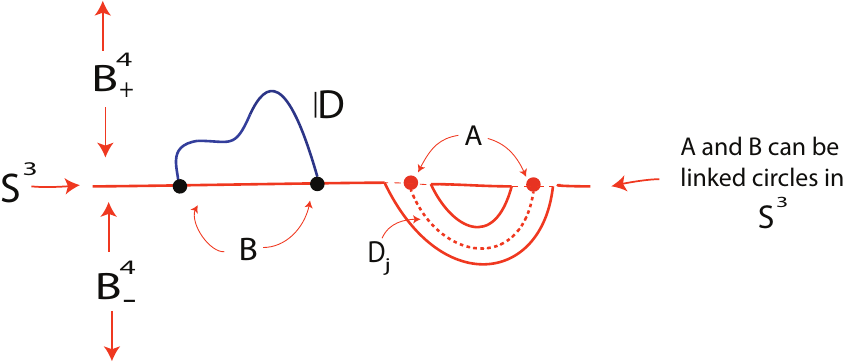}
\caption{$S^{4}= B_{-}^{4}\smile B_{+}^{4}$}   \label{y7b}
\end{center}
\end{figure}

(a) Isotope the $1$-handles of $\P_{n}$ into standard position $\#_{k} B^{3}\times S^{1} $, then take its complement in $S^{4}$, which is $\#_{k} S^{2}\times D^{2}$.  

\vspace{.1in}

(b) Carve this $\#_{k} S^{2}\times D^{2}$ along the core $\D$ of the $2$-handle of $\P_{n}$.

\vspace{.1in}

 \begin{Rm} $1$-handles $\#_{k} B^{3}\times S^{1}$ of  $ \P_{n}$ are just carved out  $2$-discs from the $4$-ball $B_{-}^{4}$ (Figure~\ref{y7b}), hence their complement in $S^{4}$ is the standard  $\#_{k} S^{2}\times B^{2} \subset S^{4}$. Here $B_{+}^{4} = S^{4}-B_{-}^{4}$, and $\D$ is the core of the $2$-handle of $\P_{n}$ attached to carved $B_{-}^{4}$. Note that even though $\D$ is just the core of the $2$-handle, its complement in $S^{4}$ (viewed upside down) could be complicated looking slice disk complement. That is, $2$-handle $N(\D)$ of $\P_{n}$, is attached to $\#_{k} B^{3}\times S^{1}$, and $S^{4}-\P_{n} \subset S^{4}$ will be the complement of the slice disk $\D$ in $\#_{k} S^{2}\times B^{2}$. So all possible embeddings $\P_{n}\subset S^{4}$ are determined by the choice of this slice disk $\D$. 
 \end{Rm}

 Even though the slice disk $\D$ is not unique, from the linking pattern of the circles $A$ and $B$ in Figure~\ref{y2}, we can determine intersections of $\D$ with the $2$-handles $\#_{k} S^{2}\times B^{2}$ of $S^{4}-\P_{n}$.  Figure~\ref{y5a} summarizes what we have done so far. The doted circle labeled by $f(\gamma)$ indicates carving of $\P_{n}$ along the meridianal disk $D$ (looking from outside its just a $2$-handle $h^{2}_{f(\gamma)} $). Then going from the second to the last picture of Figure~\ref{y5a} describes the protocork twisting, i.e. 
 $$(S^{4}-\P_{n})\smile h^{2}_{f(\gamma)} \to 
 (S^{4}-\P_{n})\smile h^{2}_{(\gamma)}$$

 \begin{figure}[ht]  \begin{center}
\includegraphics[width=.83\textwidth]{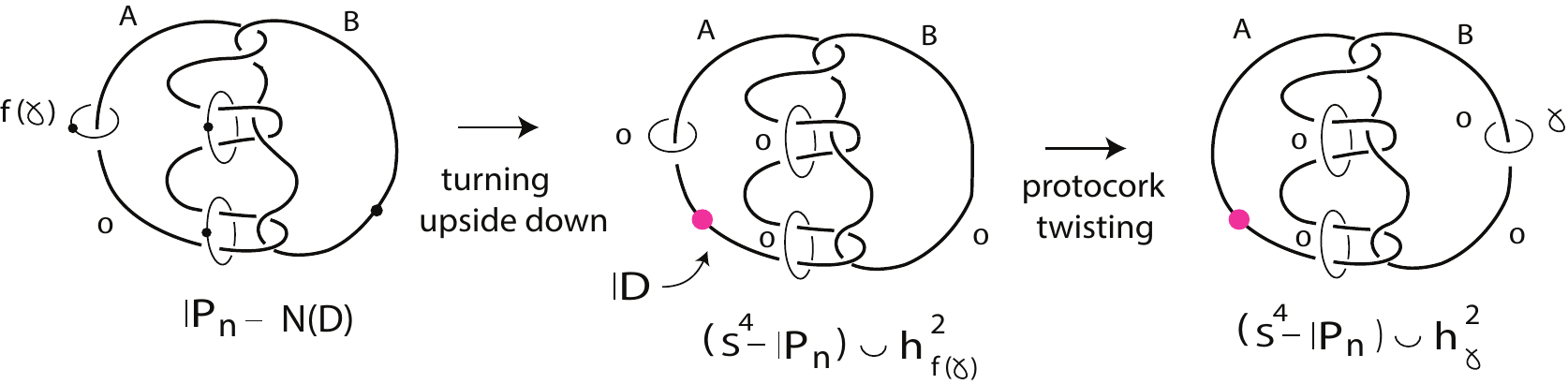}
\caption{Protocork twisting of $S^{4}$ along $\P_{n}, n=1$}   \label{y5a}
\end{center}
\end{figure}

 Then by the isotopy in Figure~\ref{y5b}, followed by the indicated handle slide, we arrive to the last picture of Figure~\ref{y5b}. After cancelling the two isolated $0$-framed unknots (i.e. $S^{2}\times B^{2}$'s) with $3$-handles,  we are now left with a chain of three linked circles: $A, B$, and $C$; where $A$ is the boundary of some carved $2$-disk in $S^{2}\times B^{2}$ (bounding a meridian $ \partial B^{2}$). 
 
 \vspace{.1in}
 
 Note that, by construction, this $2$-disk is now slid over to the other side of $C$, i.e. it doesn't go over the $2$-handle $C$  (compare this to the sliding operation in Section 1.4 of \cite{a1}). Hence by \cite{g} the red dotted circle, with a $0$-framed circle $B$ linking it, gives $B^{4}$. More specifically, any imbedded $2$-disk in $S^{2}\times B^{2}$ whose boundary is $p \times \partial B^{2}$ is isotopic to $p\times B^{2}$. Therefore the last picture of Figure~\ref{y5b} is just $B^{4} $ with a $2$-handle $C$ attached to it (with $0$-framing); and its boundary is diffeomorphic to $S^{1}\times S^{2}$. Hence by the {\it Property R} it must be $S^{2}\times B^{2}$, so this final manifold must be $S^{4}$, up to $3$-handles.  Therefore Theorem~\ref{main} in the case of  the protocork $ \V_{n}(\A) =\P_{n}$ has now been proven.

 \begin{figure}[ht]  \begin{center}
\includegraphics[width=.73\textwidth]{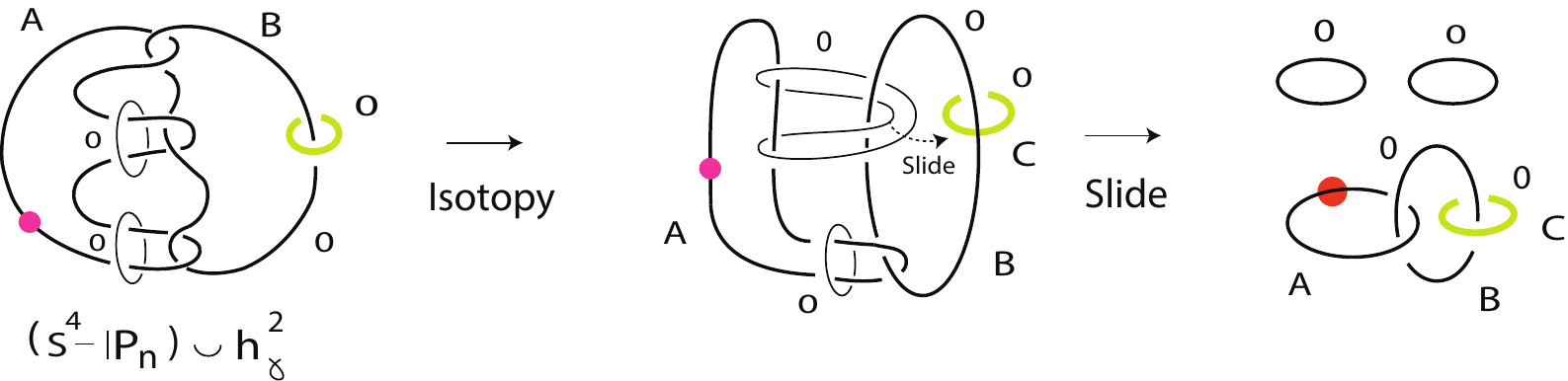}
\caption{Protocork twist continued}   \label{y5b}
\end{center}
\end{figure}

  \section {General case of $\P_{n}$ and further remarks}
  
  \vspace{.1in}

The protocork $\P_{n}$  can be identified with manifold called $L_{n}^{0}$ in \cite{ay}. Namely there is the identification of Figure~\ref{a1}.
We leave checking this as an exercise to the reader (Hint: Cancel the large dotted circle with the large 0-framed circle of $\P_{n}$, along the way slide dotted circles over each other if you have to). $L_{n}^{0}$ is the right picture of Figure~\ref{a1}, which is the manifold obtained by carving $B^{4}$ along the indicated slice disks. Figure~\ref{a6}  indicates a proof in the case  $n=1$. 

\begin{figure}[ht]  \begin{center}
\includegraphics[width=.65\textwidth]{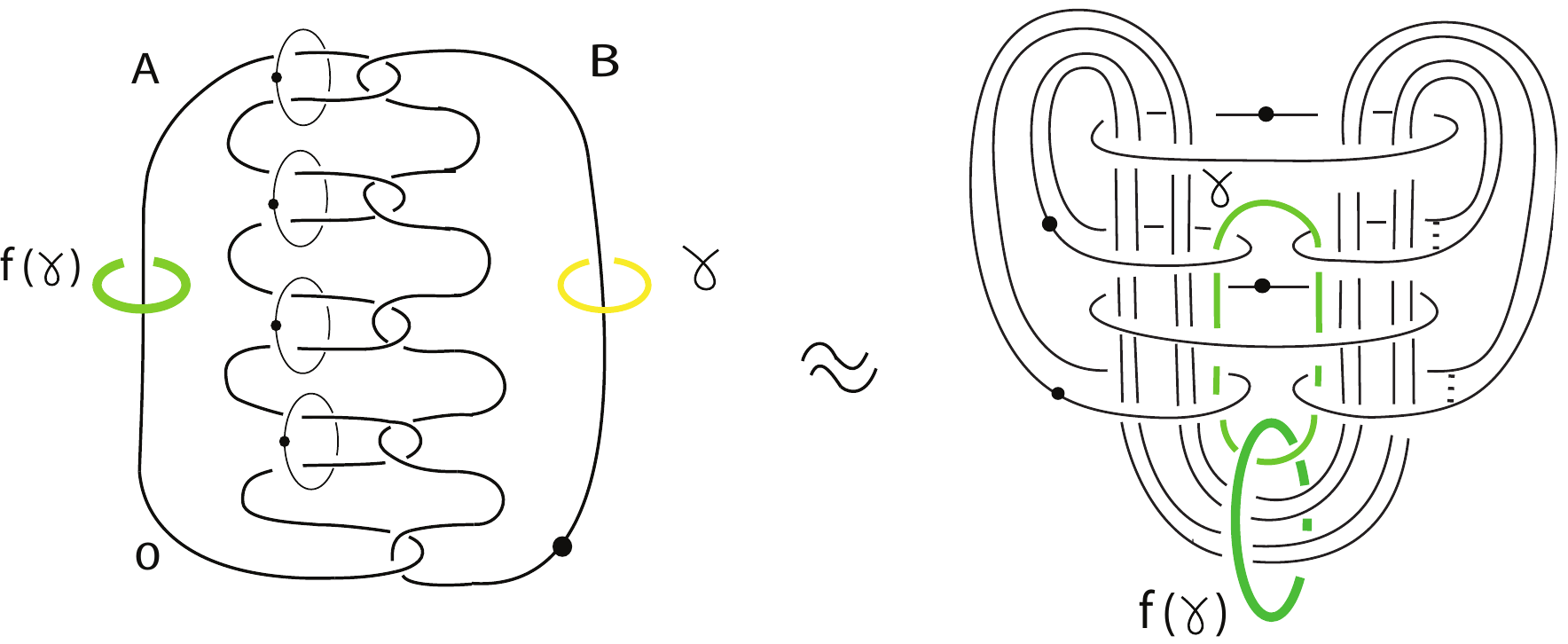}   
\caption{The equivalence $\P_{n} \approx  L_{n}^{0}$, when $n=2$}   \label{a1}
\end{center}
\end{figure}

\begin{figure}[ht]  \begin{center}
\includegraphics[width=.56\textwidth]{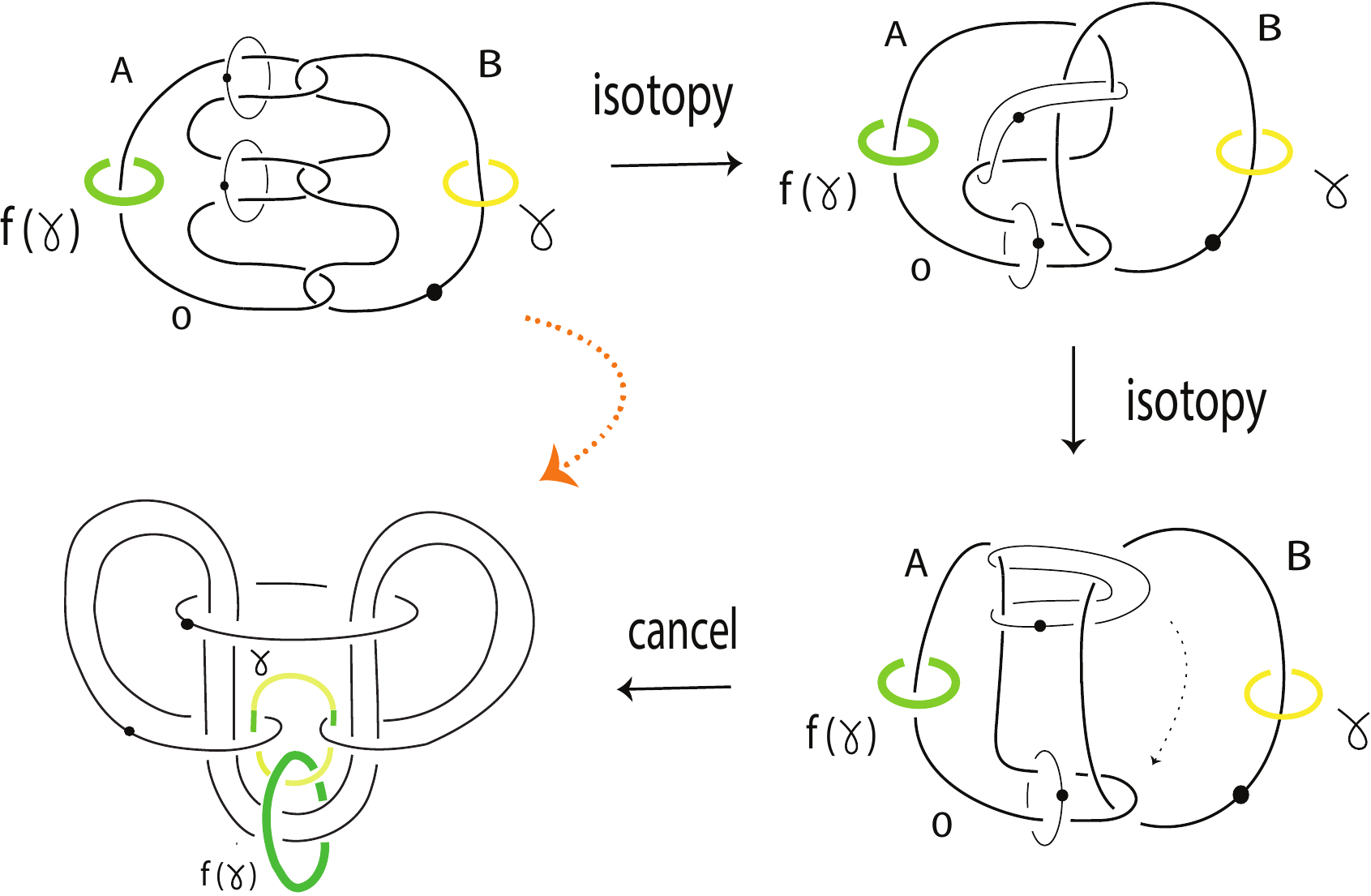}   
\caption{Describing $\P_{1}\approx L^{0}_{1}$}   \label{a6}
\end{center}
\end{figure}

\newpage

In general a protocork  $\V(\CA)$ 
is a union of $\P_{n}$'s where you are allowed to link the dotted circles with the $0$-framed circles algebraically zero times. Protocork twisting map $f$ is  just the ``zero-dot exchanging map'' between $\CA$ and $\CB$ spheres (swapping  $1$- and $2$-handles of $\CA$ and $\CB$). 

\vspace{.1in}

With this, we can identify protocork twistings of $S^{4}$ along the protocorks $\P_{n} \subset S^{4}$, and along $L_{n}^{0}\subset S^{4}$, which by \cite{ay} is diffeomorphic to $S^{4}$. This gives another proof for Theorem~\ref{main} in the case of $\V(\CA)=\P_{n}$. So to prove 4-dim smooth Poincar\'e conjecture we need to show that protocork twisting of $S^{4}$ along any protocork in $S^{4}$ gives back $S^{4}$.

\begin{figure}[ht]  \begin{center}
\includegraphics[width=.66\textwidth]{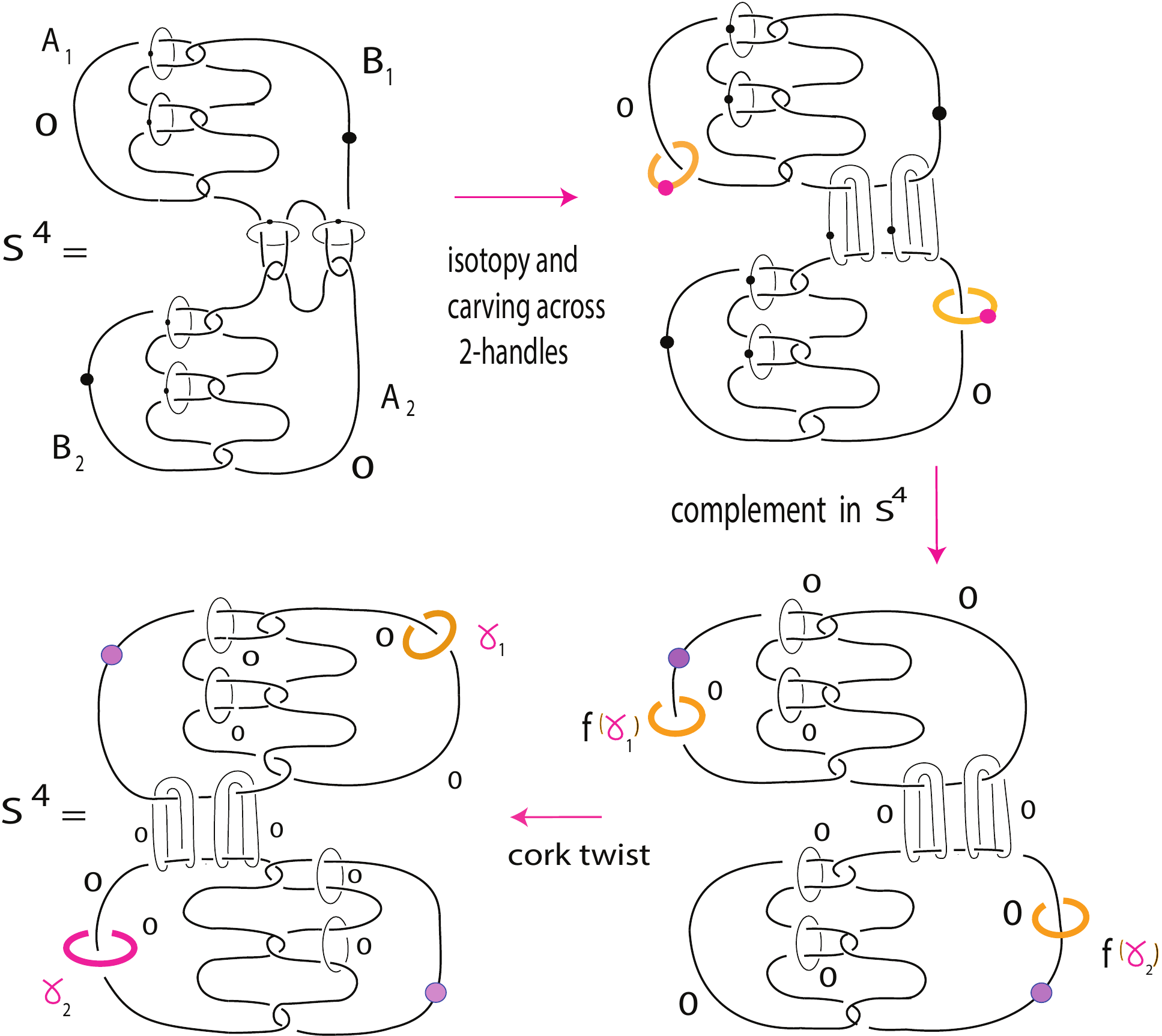}
\caption{General protocork twisting $S^{4}$}   \label{t4}
\end{center}
\end{figure}

 For this we consider ``excess  intersections'' between different $\P_{n}$'s of a protocork $\V(\CA)$, i.e. the case of $\CA$ spheres of  $\V$ intersecting its $\CB$ spheres (algebraically zero times).  The protocol described in the right picture of  Figure~\ref{y1} is a good example to understand the general case; it corresponds to the handlebody in the first picture of Figure~\ref{t4}. Now we will proceed to show how to modify the proof in case $\V(\CA)=\P_{n}$ to this general case.
 
 \vspace{.1in}

%\newpage

 Now denote the protocork complement in $S^{4}$ by $C=S^{4} -\P$, and let $f(\gamma_1),...,f(\gamma_n)$ be the linking circles of the $2$-handles $A_1,...,A_n$ of the protocork $\P$.  Clearly, carving $\P$ along its $2$-handles  (that is carving it along the meridianal discs which $f(\gamma_{i})$'s bound) will turn $\P$ into disjoint union of thickened circles  $\#S^{1}\times B^{3}\hookrightarrow S^{4}$, which is the second picture of Figure~\ref{t4}; hence its complement in $S^{4}$ is $C^{*}:=C\smile$\{2-handles along the curves $f(\gamma_{i})$\}  is a disjoint union of thickened $2$-spheres $\#B^{2}\times S^{2}$ in $S^{4}$.  Hence $S^4 = C^{*} \smile$ $3$-handles attached at top (in Figure~\ref{p1}).

\vspace{.1in}

 Now assume the first handlebody of  Figure~\ref{t4} lies in $S^{4}$, then by an isotopy and carving across its $2$-handles we obtain the second picture. Then by taking its complement in $S^{4}$ we obtain the third picture (the thick dots indicate carving along ``some'' disk bounding the corresponding circles). Then by applying protocork twist to the third picture and proceeding as in  Figure~\ref{y5b} we obtain the last picture which is diffeomorphic to $S^{4}$ by the same argument (after cancelling its $2$-handles with $3$-handles ). \end{proof}

\end{document}